\documentclass[12pt]{amsart}
\usepackage{mathrsfs}

\usepackage{amssymb,amsfonts,amsthm,amsmath}
\usepackage{epsfig}
\usepackage[utf8]{inputenc}
\usepackage{mathrsfs}
\usepackage{lscape}
\usepackage{amssymb,amsmath,graphicx,color,textcomp, amsthm,bbm,bbold, enumerate,booktabs}
\usepackage{chngcntr}
\usepackage{apptools}
\AtAppendix{\counterwithin{theorem}{section}}
\definecolor{Red}{cmyk}{0,1,1,0}

\definecolor{verde}{cmyk}{1,0,1,0}

\definecolor{loka}{cmyk}{.5,0,1,.5}
\definecolor{azul}{cmyk}{1,1,0,0}


\evensidemargin 0in \oddsidemargin 0in
\setlength{\headheight}{0cm} \setlength{\headsep}{50pt}
\setlength{\parindent}{0cm} \setlength{\textwidth}{6.5 in}
\setlength{\parskip}{.3cm}\setlength{\textheight}{21cm}
\setlength{\topmargin}{-1cm}\setlength{\linewidth}{10cm}
\setlength{\footskip}{1.5cm}

\numberwithin{equation}{section}




\newcommand{\be}{\begin{equation}}
\newcommand{\ee}{\end{equation}}

\newtheorem{definition}{Definition}

\newtheorem{corolario}{Corollary}

\newtheorem{teorema}{Theorem}

\begin{document}
\title{A new class of mild and strong solutions of integro-differential equation of arbitrary order in Banach space}
\author{J. Vanterler da C. Sousa$^1$}
\address{$^1$ Department of Applied Mathematics, Institute of Mathematics,
 Statistics and Scientific Computation, University of Campinas --
UNICAMP, rua S\'ergio Buarque de Holanda 651,
13083--859, Campinas SP, Brazil\newline
$^2$ Department of Mathematics, IFMA, 65950-000, Barra do Corda, MA, Brazil
e-mail: {\itshape \texttt{vanterlermatematico@hotmail.com, diego.gomes@ifma.edu.br, capelas@ime.unicamp.br }}}
\author{D. F. Gomes$^2$}
\author{E. Capelas de Oliveira$^1$}

\begin{abstract} The motivation that the field of differential equations provide to several researchers for the challenges that have been challenging them over the decades has contributed to the strengthening of the area within mathematics. In this sense, investigating important properties of solutions of differential equations, in particular fractional, has been object of study due to the exponential growth of the fractional calculus. In this paper, we investigate the existence and uniqueness of a new class of mild and strong solution of the fractional integro-differential equations in the Hilfer fractional derivative sense in Banach space, by means of the continuously $C_{0}$-semigroup, Banach fixed point theorem and Gronwall inequality.

\vskip.5cm
\noindent
\emph{Keywords}: Fractional integro-differential equations, mild and strong solution, existence and uniqueness, continuously $C_{0}$-semigroup, Banach fixed point theorem, Gronwall inequality.
\newline 
\end{abstract}
\maketitle

\section{Introduction}
In this paper, we consider the fractional integro-differential equation (FIE) with initial local conditions given by
\begin{equation}\label{eq1}
\left\{ 
\begin{array}{rll}
^{\mathbf{H}}\mathbb{D}_{0+}^{\mu,\nu}u\left( t\right)+ \mathcal{A}u(t)& = &f\left(t,u\left( t\right) \right)+\dfrac{1}{\Gamma(\mu)}\displaystyle\int_{t_{0}}^{t} \mathbb{H}^{\mu}(t,s)\mathbf{K}(t,s,u(s))ds\\
I^{1-\gamma}_{t_{0}+}u\left( t_{0+}\right)+g(t_{1},...,t_{p},u(\cdot)) & = & u_{0}
\end{array}
\right.
\end{equation}
where $^{\mathbf{H}}\mathbb{D}_{0+}^{\mu,\nu}\left( \cdot\right)$ is the Hilfer fractional derivative, $I^{1-\gamma}_{t_{0}}(\cdot)$ is the Riemann-Liouville fractional integral, with $0<\mu\leq 1$, $0\leq\nu\leq 1$, $\gamma=\mu+\nu(1-\mu)$, $0\leq t_{0}<t_{1}<...<t_{p}\leq t_{0}+a$ $(t\in (t_{0},t_{0}+a])$, $-\mathcal{A}$ is the infinitesimal generator of a $C_{0}$-semigroup $(\mathbb{S}(t))_{t\geq 0}$ on a Banach space $\Omega$ and $f:I\times \Omega \rightarrow \Omega $, $g(t_{1},..., t_{p},\cdot): \Omega \rightarrow \Omega$, $k:\Delta \times \Omega \rightarrow \Omega$, $\Delta=|(t,s): t_{0}\leq s \leq t \leq t_{0}+a|$ and $\mathbb{H}^{\mu}(t,s):=\psi'(s)(\psi(t)-\psi(s))^{\mu-1}$. We can substitute only elements of the set $[t_{1},...,t_{p}]$.

Over the decades, differential equations have been a very fruitful field both in theory and practice. Numerous important and relevant results on the existence and uniqueness of mild and strong solutions of differential and integrodifferential equations were investigated and published by many researchers \cite{principal1,principal,pazy}. On the other hand, with the expansion of fractional calculus, in particular, with new definitions of generalized fractional derivatives and integrals \cite{kilbas,sousa31,sousa7}, many researchers started to use such tools obtained within the fractional calculus, to obtain results only presented in the traditional sense in the field of differential equations, i.e., of integer order \cite{almeida2,almeida,dabiri2,djordjevic,salati,sousa101,sousa10}. Thus, since from the junction of the fractional calculus the different types of differential and integrodifferential equations: impulsive, functional, evolution with instantaneous and non-instantaneous impulses, new and applicable results would arise and each time would consolidate the relation of these areas. For a brief reading on the existence, uniqueness and Ulam-Hyers stability of differential and integrodifferential equations, we suggest some works \cite{andrade,chadha,changka,sousa4,pandey,sousa1,sousa3,sousa5}.

In 2010 Diagana et al. \cite{mild02}, using the Arzel\`a-Ascoli theorem, Schauder fixed point theorem and Lebesgue dominated convergence theorem, and investigated the existence and uniqueness of mild solutions for some non-local semilinear fractional integrodifferential equations and present an example to illustrate the result obtained. In this sense, in 2011 Rashid and Al-Omari \cite{mild04} also dedicated to investigate the existence of local and global mild solutions for fractional semilinear impulsive integrodifferential equations in the Caputo sense in the Banach space. In 2011, Agarwal et al. \cite{mild03}, presented sufficient conditions to investigate the existence and uniqueness of mild solutions for a class of fractional integrodifferential equations with time-dependent delay in the Riemann-Liouville sense over the Banach space. In order to validate and consolidate the obtained results, they presented a concrete application on the conduction of heat in materials. In addition to the investigation of the existence and uniqueness of fractional integrodifferential equations, we can also mention some important works about functional and impulsive differential equations \cite{chandrasekaran,gou,pri,sousa2,sousa9,suganya1,zhu}.

On the other hand, in 2012 Debbouche et al. \cite{stron01}, by means of the Schauder's fixed point theorem, Gelfand-Shilov principles and the semigroup theory, dedicated to investigate the existence of mild and strong solutions for nonlinear fractional integrodifferential equations Sobolev type in the Caputo fractional derivative sense. The results obtained by Debbouche, were under non-local conditions in Banach spaces. We can also highlight the work of the authors Qasen et al. \cite{strong02} in 2015, in which they dedicated to investigate the existence and regularity of mild and strong solutions for a class of abstract integrodifferential equations in the Caputo sense in the Banach space $\Omega$ with fractional resolvent operator. We can mention important works that have been published in the context of stochastic fractional integrodifferential equations \cite{chadha1,chadha2}.

It is remarkable the excellence and importance of the results that have been obtained and published by many researchers, and that the range of options over the decades has been expanding due to the interdisciplinary between the fractional calculus and differential and integral equations \cite{jawahdou,kexue,ponce,rashid,sousa6}. However, there are still many outstanding issues that need to be clarified and investigated. In this sense, in order to provide new results on the existence and uniqueness of mild solutions of fractional integrodifferential equations in the Banach space, one of the motivations for the achievement of this paper is the contribution of such scientific growth.

Some points deserve to be highlighted in relation to the main results obtained in this paper:
\begin{itemize}
\item We present a new class of mild and strong solutions for the fractional integrodifferential equation. This class can be obtained from the free choice of the parameters $\alpha$ e $\beta$. Note that from the choice of the limits $\beta \rightarrow 1$ and $\beta \rightarrow 0$ both in Eq.(\ref{eq1}) as in their respective solution Eq.(\ref{eq34}), we obtain the classical fractional Caputo and Riemann-Liouville versions;

\item We investigate the existence and uniqueness of mild solutions of the fractional integrodifferential equation in the Banach $\Omega $ space;

\item We investigate the existence and uniqueness of strong solutions of the fractional integrodifferential equation in the Banach $\Omega$ space;
\end{itemize}

This paper is organized as follows: in Section 2, we present some fundamental concepts and results for the development of the paper. In section 3, our main result we investigated the existence and uniqueness of mild and strong solutions of the fractional integrodifferential equation in the Hilfer sense in the Banach space. In addition, we present two corollaries that are a direct consequence of the main results presented. Concluding remarks close this paper.

\section{Preliminaries}

In this section, we present some definitions and theorem essential in the investigation of the main results of the paper.

First, we being with the introduction of the weighted space of functions $u\in J':=(t_{0},t_{0}+a]$ is given by \cite{sousa6}
\begin{equation*}
C_{1-\gamma}(J,\Omega)=\left\{u\in C(J',\Omega), t^{1-\gamma}u(t)\in C(J,\Omega)\right\}, 0\leq \gamma\leq 1
\end{equation*}
with norm 
\begin{equation*}
||u||_{C_{1-\gamma}}:=\sup_{t\in J'}||t^{1-\gamma}u(t)||.
\end{equation*}
Obviously, the space $C_{1-\gamma}(J,\Omega)$ is a Banach space.

Let $J=[t_{0},t_{0}+a]$ be a finite or infinite interval of the line $\mathbb{R}_{+}$ and $0<\mu \leq 1$. Also, let $\psi(t)$ be an increasing and positive monotone function on $J'=(t_{0},t_{0}+a]$ having a continuous derivative $\psi'(t)$ on $J''=(t_{0},t_{0}+a)$. The left-sided fractional integral of a function $f$ with respect to the function $\psi$ on $J=[t_{0},t_{0}+a]$ is defined by \cite{sousa7}
\begin{equation*}
I^{\mu;\psi}_{t_{0+}}u(t)=\frac{1}{\Gamma(\mu)}\int_{t_{0}}^{t_{0}+a} \mathbb{H}^{\mu}(t,s)u(s)ds
\end{equation*}
where $\mathbb{H}^{\mu}(t,s):=\psi'(s)(\psi(t)-\psi(s))^{\mu-1}$.

Choosing $\psi(t)=t$, we have the Riemann-Liouville fractional integral given by
\begin{equation}\label{eq3}
I^{\mu}_{t_{0+}}u(t)=\frac{1}{\Gamma(\mu)}\int_{t_{0}}^{t_{0}+a} (t-s)^{\mu-1}u(s)ds.
\end{equation}

On the other hand, let $n-1<\mu \leq n$ with $n\in \mathbb{N}$, $J$ the interval and $f,\psi\in C^{n}(J,\mathbb{R})$ be two functions such that $\psi$ is increasing and $\psi'(t)\neq 0$ for all $t\in J$. The left-sided $\psi$-Hilfer fractional derivative $^{\mathbf{H}}\mathbb{D}_{0+}^{\mu,\nu;\psi}(\cdot)$ of a function $f$ of order $\mu$ and type $0\leq \nu \leq 1$ is defined by \cite{sousa7}
\begin{equation*}
^{\mathbf{H}}\mathbb{D}_{0+}^{\mu,\nu;\psi}u(t)=I^{\nu(n-\mu);\psi}_{t_{0+}} \left(\frac{1}{\psi'(t)} \frac{d}{dt}\right)^{n}  I^{(1-\nu)(n-\mu)}_{t_{0+}}u(t).
\end{equation*}

Choosing $\psi(t)=t$, we have the Hilfer fractional derivative, given by
\begin{equation}\label{eq4}
^{\mathbf{H}}\mathbb{D}_{0+}^{\mu,\nu}u(t)=I^{\nu(n-\mu);\psi}_{t_{0+}} \left(\frac{d}{dt}\right)^{n}  I^{(1-\nu)(n-\mu)}_{t_{0+}}u(t).
\end{equation}

In this paper, we use Eq.(\ref{eq3}) and the so-called Hilfer fractional derivative Eq.(\ref{eq4}).

Consider the fractional initial value problem
\begin{equation}\label{eq2}
\left\{ 
\begin{array}{rll}
^{\mathbf{H}}\mathbb{D}_{t_{0+}}^{\mu,\nu}u\left( t\right)&=&\mathcal{A}u(t)+f(t)\\
I^{1-\gamma}_{t_{0}+}u(t_{0})& = & u_{0}
\end{array}
\right.
\end{equation}
where $^{\mathbf{H}}\mathbb{D}_{0+}^{\mu,\nu}\left( \cdot\right)$ is the Hilfer fractional derivative, $I^{1-\gamma}_{t_{0}}(\cdot)$ is the Riemann-Liouville fractional integral, with $0<\mu\leq 1$, $0\leq\nu\leq 1$, $\gamma=\mu+\nu(1-\mu)$, $f:(t_{0},t_{0}+a]\rightarrow\Omega$, $\mathcal{A}$ is the infinitesimal generator of a $C_{0}$-semigroup $(\mathbb{S}(t))_{t\geq 0}$, $u_{0}\in \Omega$ and $t_{0}\geq 0$.

Moreover a mild solution for Hilfer fractional evolution equations 
\begin{equation}\label{eq78}
\left\{ 
\begin{array}{rll}
D_{0+}^{\gamma,\beta}x\left( t\right)&=&Ax(t)+f(t,x(t)), \text{ } t\in J'=(0,b]\\
I^{(1-\beta)(1-\gamma)}_{t_{0}+}x(0)& = & x_{0}
\end{array}
\right.
\end{equation}
is given by a fractional version
\begin{eqnarray}\label{eq34}
&&x(t)=S_{\gamma,\beta}(t)x_{0}+\int_{0}^{t}\mathcal{K}_{\beta}(t-s)f(s,x(s))ds
\end{eqnarray}
where $ \mathcal{K}_{\beta }\left( t\right) =t^{\beta -1}G_{\beta }\left( t\right)$, $G_{\beta }\left( t\right) =\displaystyle\int_{0}^{\infty }\beta \theta M_{\beta }\left( \theta \right) S_{\gamma ,\beta }\left( t^{\beta }\theta \right) d\theta $, $S_{\gamma ,\beta }\left( t\right) =I_{\theta }^{\beta \left( 1-\gamma \right) }\mathcal{K}_{\beta}\left( t\right)$, $0< \gamma \leq 1$ and $0\leq \beta\leq 1$. For more details see \cite{pri} and references therein.

\begin{definition} A function $u$ is said to be a strong solution of problem {\rm Eq.(\ref{eq2})} on $I$, if $u$ is differentiable almost everywhere (a.e) on $I$
\begin{equation*}
^{\mathbf{H}}\mathbb{D}_{0+}^{\mu,\nu} \in L'((t_{0},t_{0}+a],\Omega),\text{ } I^{1-\gamma}_{t_{0}+}u(t_{0})= u_{0}
\end{equation*}
and
\begin{equation*}
^{\mathbf{H}}\mathbb{D}_{0+}^{\mu,\nu}u\left( t\right)=\mathcal{A}(t)+f(t), \text{ } 
\end{equation*}
a.e on $I$.
\end{definition}

We introduce the definition of the Wright function, fundamental in mild solution of Eq.(\ref{eq1}). Then, the Wright function $\mathbf{M}_{\mu }\left( \mathbf{Q}\right) $ is defined by \cite{pri}
\begin{equation*}
\mathbf{M}_{\mu }\left( \mathbf{Q}\right) =\overset{\infty }{\underset{n=1}{\sum }}\frac{\left( -\mathbf{Q}\right) ^{n-1}}{\left( n-1\right) !\Gamma \left( 1-\mu n\right) } ,\text{ }0<\mu <1,\text{ }\mathbf{Q}\in \mathbb{C}
\end{equation*}
satisfying the equation
\begin{equation*}
\int_{0}^{\infty }\theta ^{\overline{\delta }}\mathbf{M}_{\mu }\left( \theta \right) d\theta =\frac{\Gamma \left( 1+\overline{\delta }\right) }{\Gamma \left( 1+\mu \overline{\delta }\right) },\text{ for }\overline{\delta },\theta \geq 0.
\end{equation*}

\begin{teorema}\cite{porco} If $\Omega$ is a reflexive Banach space, $u_{0}\in \mathcal{D}(\mathcal{A})$ and $f$ is Lipschitz continuous on $I$ {\rm Eq.(\ref{eq2})} has a unique strong solution $u$ on $I$ given by the expression
\begin{equation*}
u(t)=\mathbb{S}_{\mu,\nu}(t)u_{0}+\int_{t_{0}}^{t}\mathcal{K}_{\mu}(t-s)f(s)ds, \text{ } t\in I.
\end{equation*}
\end{teorema}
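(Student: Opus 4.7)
The plan is to take $u(t):=\mathbb{S}_{\mu,\nu}(t)u_{0}+\int_{t_{0}}^{t}\mathcal{K}_{\mu}(t-s)f(s)\,ds$ --- which is already a mild solution of Eq.(\ref{eq2}), being the specialization of Eq.(\ref{eq34}) to $g\equiv 0$ and purely time-dependent forcing --- and verify that under the three hypotheses it qualifies as a strong solution. Each hypothesis plays a distinct role: $u_{0}\in\mathcal{D}(\mathcal{A})$ controls the free term, Lipschitz continuity of $f$ controls the Duhamel term, and reflexivity of $\Omega$ converts the resulting Lipschitz regularity into a.e.\ differentiability via the Radon--Nikodym property.

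First I would verify the initial condition $I^{1-\gamma}_{t_{0}+}u(t_{0})=u_{0}$ by applying $I^{1-\gamma}_{t_{0}+}$ to each summand. For $\mathbb{S}_{\mu,\nu}(t)u_{0}$ the identity $S_{\gamma,\beta}(t)=I_{\theta}^{\beta(1-\gamma)}\mathcal{K}_{\beta}(t)$ recorded before the theorem, combined with the Wright moment formula $\int_{0}^{\infty}\theta^{\overline{\delta}}\mathbf{M}_{\mu}(\theta)\,d\theta=\Gamma(1+\overline{\delta})/\Gamma(1+\mu\overline{\delta})$, recovers $u_{0}$ in the limit $t\to t_{0}^{+}$; for the convolution term a Beta-function calculation shows that $I^{1-\gamma}_{t_{0}+}$ applied to it vanishes at $t_{0}$.

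Next I would establish that $u$ is Lipschitz continuous on $I$. For the semigroup summand, the Wright representation together with $u_{0}\in\mathcal{D}(\mathcal{A})$ and the bound $\|(\mathbb{S}(h)-\mathbb{I})v\|\le\int_{0}^{h}\|\mathbb{S}(r)\mathcal{A}v\|\,dr\le Mh\|\mathcal{A}v\|$ yields a Lipschitz estimate. For the Duhamel summand I would split
\[
\int_{t_{0}}^{t+h}\!\!-\int_{t_{0}}^{t}=\int_{t}^{t+h}\!\!\mathcal{K}_{\mu}(t+h-s)f(s)\,ds+\int_{t_{0}}^{t}\bigl[\mathcal{K}_{\mu}(t+h-s)-\mathcal{K}_{\mu}(t-s)\bigr]f(s)\,ds,
\]
and use the Lipschitz bound on $f$ together with the integrability of $\mathcal{K}_{\mu}$ to dominate both pieces by $Ch$. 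Reflexivity of $\Omega$ then grants the Radon--Nikodym property, so $u$ is differentiable a.e.\ on $I$ with $u'\in L^{\infty}(I,\Omega)$, which in turn makes $^{\mathbf{H}}\mathbb{D}_{0+}^{\mu,\nu}u=I_{t_{0}+}^{\nu(1-\mu)}u'$ a well-defined element of $L^{1}(I,\Omega)$.

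Finally I would apply $^{\mathbf{H}}\mathbb{D}_{0+}^{\mu,\nu}$ to both sides of the mild formula, using the fractional-resolvent identities satisfied by the $(\mathbb{S}_{\mu,\nu},\mathcal{K}_{\mu})$ pair attached to $\mathcal{A}$, to arrive at $^{\mathbf{H}}\mathbb{D}_{0+}^{\mu,\nu}u(t)=\mathcal{A}u(t)+f(t)$ a.e.\ on $I$; uniqueness then follows from a Gronwall argument applied to the difference of two strong solutions, as announced in the abstract. The main obstacle is exactly this last commutation step: the kernel $\mathcal{K}_{\mu}(t-s)$ is singular at $s=t$, so differentiating under the integral requires a careful integration by parts against the Lipschitz forcing $f$, and this is where reflexivity earns its place --- without it, the mild solution need not be a.e.\ differentiable and the identification of $^{\mathbf{H}}\mathbb{D}_{0+}^{\mu,\nu}u$ with $\mathcal{A}u+f$ cannot even be posed.
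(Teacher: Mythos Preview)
The paper does not prove this theorem: it is quoted verbatim from reference \cite{porco} in the Preliminaries section and carries no proof of its own here. It is invoked only as a black box inside the proof of Theorem~\ref{theo34}, where the authors apply it to the linear problem with forcing $t\mapsto f(t,u(t))+\frac{1}{\Gamma(\mu)}\int_{t_{0}}^{t}\mathbb{H}^{\mu}(t,s)\mathbf{K}(t,s,u(s))\,ds$ once they have shown that forcing to be Lipschitz. So there is nothing in the present paper to compare your proposal against.

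That said, your outline is the expected one for results of this type and matches the classical Pazy-style argument that the cited reference presumably follows: Lipschitz continuity of $u$ is obtained from $u_{0}\in\mathcal{D}(\mathcal{A})$ (free part) and Lipschitz $f$ (Duhamel part), reflexivity supplies the Radon--Nikodym property so that Lipschitz implies a.e.\ differentiability, and then one identifies the fractional derivative with $\mathcal{A}u+f$. The one place where your sketch is thin is the final ``commutation'' step: you correctly flag the singularity of $\mathcal{K}_{\mu}$ at $s=t$, but the actual argument typically proceeds not by differentiating under the integral directly but by showing that the a.e.\ derivative of the mild solution, once it exists, forces $u(t)\in\mathcal{D}(\mathcal{A})$ for a.e.\ $t$ and then reading off the equation from the resolvent identity for $(\mathbb{S}_{\mu,\nu},\mathcal{K}_{\mu})$. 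Your integration-by-parts remark is in the right spirit but would need to be made precise along those lines.
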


\begin{definition} A continuous solutions $u$ of the integral equation
\begin{eqnarray}\label{eq34}
&&u(t)=\mathbb{S}_{\mu,\nu}(t)[u_{0}- g\left( t_{1},...,t_{p},u\left( \cdot \right) \right)] +\int_{t_{0}}^{t}\mathcal{K}_{\nu}(t-s)f(s,u(s))ds\notag
\\ && +\frac{1}{\Gamma(\mu)}\int_{t_{0}}^{t}\mathcal{K}_{\nu}(t-s)\int_{t_{0}}^{s} \mathbb{H}^{\mu}(s,\tau)\mathbf{K}(s,\tau,u(\tau))d\tau ds
\end{eqnarray}
is said to be a mild solution of problem Eq.(\ref{eq1}), on $I$, where $\mathcal{K}_{\nu }\left( t\right) =t^{\gamma -1}\mathbf{G}_{\nu }\left( t\right)$, $\mathbf{G}_{\nu }\left( t\right) =\displaystyle\int_{0}^{\infty }\nu \theta \mathbf{M}_{\nu }\left( \theta \right) \mathbb{S}_{\mu ,\nu }\left( t^{\nu }\theta \right) d\theta $ 
and $\mathbb{S}_{\mu ,\nu }\left( t\right) =I_{\theta }^{\nu \left( 1-\mu \right) }\mathcal{K}_{\nu }\left( t\right)$.
\end{definition}

The following theorems and Corollary, are very important when we want to investigation existence, uniqueness, Ulam-Hyers stability and another fundamental properties of the fractional differential equations.

\begin{teorema} \cite{sousa1}{\rm (Banach fixed point theorem)} Let $(\Omega,d)$ be a generalized complete metric space. Assume that $\hat {\Omega}:\Omega\rightarrow \Omega$ is a strictly contractive operator with the Lipschitz constant $L<1$. If there exists a nonnegative integer $k$ such that $d(\hat {\Omega}^{k+1},\hat {\Omega}^{k})<\infty$ for some $x\in \Omega$, then the following are true:
\begin{enumerate}
\item  The sequence $\hat {\Omega}^{k}x$ converges to a fixed point $x^{*}$ of $\hat {\Omega}$;
\item $x^{*}$ is the unique fixed point of $\hat {\Omega}$ in $\hat {\Omega}^{*}=\left\{y\in\Omega/ d(\hat {\Omega}^{k}x,y)<\infty\right\}$;
\item If $y\in \hat {\Omega}^{*}$, then $d(y,x^{*})\leq \dfrac{1}{1-L}d(\hat {\Omega}y,y)$.
\end{enumerate}
\end{teorema}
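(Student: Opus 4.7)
My plan is to adapt the standard Banach contraction argument to the generalized metric setting, where $d$ may take value $+\infty$ and so one has to track finiteness of distances carefully. Setting $x_n := \hat{\Omega}^n x$, strict contractivity gives $d(x_{n+1},x_n) \leq L^{n-k}\, d(x_{k+1},x_k)$ for all $n \geq k$; since $d(x_{k+1},x_k)$ is finite by hypothesis and $L<1$, summing with the triangle inequality yields $d(x_m,x_n) \leq L^{n-k}(1-L)^{-1} d(x_{k+1},x_k) < \infty$ for $m \geq n \geq k$. Hence $(x_n)_{n\geq k}$ is Cauchy with all mutual distances finite, and completeness of $(\Omega,d)$ produces a limit $x^* \in \Omega$. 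Continuity of $\hat{\Omega}$ (immediate from strict contractivity) gives $\hat{\Omega} x^* = \lim \hat{\Omega} x_n = \lim x_{n+1} = x^*$, which proves (1).

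For the uniqueness claim (2), suppose $y \in \hat{\Omega}^*$ satisfies $\hat{\Omega} y = y$. By the defining property of $\hat{\Omega}^*$ together with the triangle inequality, $d(x^*,y) \leq d(x^*,x_k) + d(x_k,y) < \infty$. Then $d(x^*,y) = d(\hat{\Omega} x^*, \hat{\Omega} y) \leq L\, d(x^*,y)$ forces $d(x^*,y)=0$, since the quantity is finite and $L<1$.

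For the quantitative estimate (3), I would fix $y \in \hat{\Omega}^*$, apply the triangle inequality once, and then use $\hat{\Omega} x^* = x^*$ together with the contraction:
\begin{equation*}
d(y,x^*) \;\leq\; d(y,\hat{\Omega} y) + d(\hat{\Omega} y,\hat{\Omega} x^*) \;\leq\; d(\hat{\Omega} y,y) + L\, d(y,x^*).
\end{equation*}
Rearranging gives $d(y,x^*) \leq (1-L)^{-1} d(\hat{\Omega} y,y)$, which is legitimate because $d(y,x^*)$ is finite by the argument used in (2).

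The main obstacle, and the only place the proof diverges from the classical Banach contraction theorem, is the bookkeeping of finite versus infinite distances. The seed hypothesis $d(\hat{\Omega}^{k+1}x,\hat{\Omega}^k x)<\infty$ is precisely what lets the geometric-series bound propagate finiteness to every pair in the orbit $\{x_n\}$, and from there to every $y \in \hat{\Omega}^*$; without that single input the triangle inequalities used in (2) and (3) would be vacuous and the contraction estimate could not be rearranged.
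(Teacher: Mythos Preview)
Your argument is correct: it is the standard Diaz--Margolis proof of the contraction principle in a generalized complete metric space, and you have handled the only nontrivial issue (tracking which distances are finite so that the rearrangements in (2) and (3) are legitimate) properly. Note, however, that the paper does not give its own proof of this theorem at all; it merely states the result with a citation to \cite{sousa1}, so there is nothing in the paper to compare your argument against.
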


\begin{teorema}\label{te1}{\rm (Gronwall inequality)} Let $u,v$ be two integrable functions and $g$ continuous, with domain $[a,b]$. Let $\psi\in C^{1}[a,b]$ an increasing function such that $\psi'(t)\neq 0$, $\forall t\in [a,b]$. Assume that
\\
{\rm (1)} $u$ and $v$ are nonnegative;
\\
{\rm (2)} $g$ is nonnegative and nondecreasing.

If 
\begin{equation*}
u(t)\leq v(t)+g(t) \int_{a}^{t} \psi'(\tau)(\psi(t)-\psi(\tau))^{\alpha-1} u(\tau)d\tau;
\end{equation*}
then
\begin{equation*}
u(t)\leq v(t)+ \int_{a}^{t} \sum_{k=1}^{\infty} \frac{(g(t)\Gamma(\alpha))^{k}}{\Gamma(\alpha k)} \psi'(\tau)(\psi(t)-\psi(\tau))^{k\alpha-1} v(\tau)d\tau,
\end{equation*}
$\forall t \in [a,b]$.
\end{teorema}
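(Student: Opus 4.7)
The plan is to mimic the classical proof of Gronwall's inequality, iterating the hypothesis and identifying the series that emerges. Define the integral operator
\[
B\phi(t) := g(t)\int_a^t \psi'(\tau)\bigl(\psi(t)-\psi(\tau)\bigr)^{\alpha-1}\phi(\tau)\,d\tau,
\]
so the hypothesis reads $u\le v + Bu$. Iterating $n$ times and using monotonicity of $B$ (which comes from nonnegativity of $u,v,g,\psi'$), I obtain
\[
u(t)\le \sum_{k=0}^{n} B^{k}v(t) + B^{n+1}u(t),
\]
with the convention $B^{0}v=v$. The theorem would then follow if I can (i) give a closed-form bound for $B^{k}v$ matching the stated summand and (ii) show $B^{n+1}u(t)\to 0$ as $n\to\infty$.

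For step (i) I would prove by induction on $k$ that
\[
B^{k}v(t)\le \frac{\bigl(g(t)\Gamma(\alpha)\bigr)^{k}}{\Gamma(\alpha k)}\int_a^{t}\psi'(\tau)\bigl(\psi(t)-\psi(\tau)\bigr)^{k\alpha-1}v(\tau)\,d\tau.
\]
The base case $k=1$ is the hypothesis. For the inductive step, I plug the estimate for $B^{k}v$ into $B(B^{k}v)$, use that $g$ is nondecreasing to pull out $g(t)^{k+1}$, swap the order of integration (Fubini, valid by nonnegativity), and reduce the inner $s$-integral to
\[
\int_\tau^{t}\psi'(s)\bigl(\psi(t)-\psi(s)\bigr)^{\alpha-1}\bigl(\psi(s)-\psi(\tau)\bigr)^{k\alpha-1}\,ds.
\]
Performing the substitution $w=\bigl(\psi(s)-\psi(\tau)\bigr)/\bigl(\psi(t)-\psi(\tau)\bigr)$ (legitimate because $\psi$ is $C^{1}$ and strictly increasing), this integral becomes $\bigl(\psi(t)-\psi(\tau)\bigr)^{(k+1)\alpha-1}B(\alpha,k\alpha)=\bigl(\psi(t)-\psi(\tau)\bigr)^{(k+1)\alpha-1}\Gamma(\alpha)\Gamma(k\alpha)/\Gamma((k+1)\alpha)$, and the $\Gamma(k\alpha)$ cancels against the factor in the inductive hypothesis to produce exactly the required $(k+1)$-form. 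This Beta-function computation is the one nonroutine calculation and is the main obstacle.

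For step (ii), since $g$ is continuous on $[a,b]$ it is bounded by some $M$, and $u$ is integrable hence (after the first iteration) bounded on $[a,b]$ by a constant $C$; applying the inductive estimate from (i) with $v$ replaced by $u$ gives
\[
B^{n+1}u(t)\le C\,\frac{\bigl(M\Gamma(\alpha)\bigr)^{n+1}\bigl(\psi(b)-\psi(a)\bigr)^{(n+1)\alpha}}{\Gamma((n+1)\alpha+1)},
\]
which vanishes as $n\to\infty$ by the asymptotics of the Gamma function. Finally, nonnegativity of each summand allows me to interchange the limit in $n$ with the integral and write the resulting series inside a single integral, yielding the claimed inequality for all $t\in[a,b]$.
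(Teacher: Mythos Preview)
The paper does not actually prove this theorem: its entire proof is the line ``See \cite{sousa51}.'' So there is no in-paper argument to compare against; your sketch supplies what the paper omits. Your approach---iterating the operator $B$, proving the closed form for $B^{k}v$ by induction via the Beta-function identity, and showing the remainder $B^{n+1}u\to0$---is exactly the standard proof of the $\psi$-fractional Gronwall inequality and is the one given in the cited source.

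Two small points of imprecision worth tightening. First, the base case $k=1$ is not ``the hypothesis'' $u\le v+Bu$; it is simply the definition of $Bv$, which already equals the claimed expression since $\Gamma(\alpha)/\Gamma(\alpha)=1$. Second, the sentence ``$u$ is integrable hence (after the first iteration) bounded'' is not quite right when $0<\alpha<1$, since the kernel is singular and $Bu$ need not be bounded for merely integrable $u$. The clean fix is to apply your inductive estimate directly to $u$ rather than to a bounded surrogate: for $n$ large enough that $(n+1)\alpha\ge1$ the factor $(\psi(t)-\psi(\tau))^{(n+1)\alpha-1}$ is bounded by $(\psi(b)-\psi(a))^{(n+1)\alpha-1}$, and then $\int_a^t\psi'(\tau)u(\tau)\,d\tau\le\|\psi'\|_{\infty}\|u\|_{L^1}$ is a finite constant, so the remainder is dominated by a constant times $(M\Gamma(\alpha))^{n+1}(\psi(b)-\psi(a))^{(n+1)\alpha-1}/\Gamma((n+1)\alpha)\to0$. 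With these cosmetic repairs your argument is complete.
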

\begin{proof}
See \cite{sousa51}.
\end{proof}

\begin{corolario} Under the hypotheses of Theorem \ref{te1}, let $v$ be a nondecreasing function on $[a,b]$. Then, we have
\begin{equation*}
u(t)\leq v(t) \mathbb{E}_{\alpha}( g(t) \Gamma(\alpha)(\psi(t)-\psi(a))^{\alpha}), \text{ } \forall t\in [a,b],
\end{equation*}
where $\mathbb{E}_{\alpha}(\cdot)$ is a Mittag-Leffler function with one parameter.
\end{corolario}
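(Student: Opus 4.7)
The plan is to derive the corollary directly from the conclusion of Theorem \ref{te1} by exploiting the monotonicity of $v$ to pull it out of the integral, and then to recognize the resulting series as the Mittag-Leffler function.

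First I would start from the inequality
\begin{equation*}
u(t)\leq v(t)+ \int_{a}^{t} \sum_{k=1}^{\infty} \frac{(g(t)\Gamma(\alpha))^{k}}{\Gamma(\alpha k)} \psi'(\tau)(\psi(t)-\psi(\tau))^{k\alpha-1} v(\tau)\,d\tau
\end{equation*}
guaranteed by Theorem \ref{te1}. Since $v$ is nondecreasing on $[a,b]$, for every $\tau\in[a,t]$ one has $v(\tau)\leq v(t)$, so I can bound $v(\tau)$ by $v(t)$ under the integral sign and, assuming enough regularity to swap sum and integral (which is fine since all terms are nonnegative, by Tonelli), factor $v(t)$ out of the entire series.

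The next step is to evaluate, for each fixed $k\geq 1$, the inner integral $\int_{a}^{t}\psi'(\tau)(\psi(t)-\psi(\tau))^{k\alpha-1}d\tau$. Using the substitution $w=\psi(t)-\psi(\tau)$ (valid because $\psi$ is $C^{1}$ with $\psi'\neq 0$), this integral equals $(\psi(t)-\psi(a))^{k\alpha}/(k\alpha)$. Substituting back I obtain
\begin{equation*}
u(t)\leq v(t)\left[1+\sum_{k=1}^{\infty}\frac{(g(t)\Gamma(\alpha))^{k}}{k\alpha\,\Gamma(\alpha k)}\,(\psi(t)-\psi(a))^{k\alpha}\right].
\end{equation*}

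Now I would apply the identity $k\alpha\,\Gamma(k\alpha)=\Gamma(k\alpha+1)$ to rewrite each coefficient as $1/\Gamma(k\alpha+1)$, and then collect the powers to write the bracket as $\sum_{k=0}^{\infty}\bigl(g(t)\Gamma(\alpha)(\psi(t)-\psi(a))^{\alpha}\bigr)^{k}/\Gamma(k\alpha+1)$, which is exactly $\mathbb{E}_{\alpha}\bigl(g(t)\Gamma(\alpha)(\psi(t)-\psi(a))^{\alpha}\bigr)$ by the definition of the one-parameter Mittag-Leffler function. This yields the desired bound. No step is genuinely hard; the only place that requires a little care is the interchange of the sum and the integral, but the nonnegativity of every factor makes this immediate via the monotone convergence theorem.
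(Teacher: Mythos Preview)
Your argument is correct and is precisely the standard derivation of this Gronwall--Mittag-Leffler bound: use monotonicity of $v$ to pull $v(t)$ out, integrate the $\psi$-kernel explicitly, apply $k\alpha\,\Gamma(k\alpha)=\Gamma(k\alpha+1)$, and recognize the series as $\mathbb{E}_{\alpha}$. The paper itself does not actually prove this corollary; its proof reads in full ``See \cite{sousa51}'', so you have supplied the details the paper delegates to that reference, and your steps match what one finds there.
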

\begin{proof}
See \cite{sousa51}.
\end{proof}

\section{Main results}
In this section, we present the main results of this paper: the investigation of the existence and uniqueness of mild and strong solutions of fractional integro-differential equations. We consider some important observations from the obtained results.

\subsection{Existence and uniquenesses of a mild solution to FIE}

Firstly, for the investigation of the main results that will be presented through theorems, some conditions are necessary and sufficient to obtain them. In this sense, we have:

\subsubsection*{CONDITIONS I}
\begin{enumerate}
\item $\Omega$ is a Banach space with $||(\cdot)||_{C_{1-\gamma}}$ and $u_{0}\in \Omega$;

\item $0\leq t_{0} < t_{1}<...<t_{p}\leq t_{0}+a$ and $B_{r}=\left\{v: ||v||_{C_{1-\gamma}}\leq r\right\}\subset \Omega$;

\item $f:I\times \Omega \rightarrow \Omega $ is continuous in $t$ on $I$ and there exists a constant $\mathbf{L}\geq 0$ such that
\begin{equation*}
||f(s,v_{1})-f(s,v_{2})||_{C_{1-\gamma}}\leq \mathbf{L}||v_{1}-v_{2}||_{C_{1-\gamma}}, \textnormal{for s $\in I$, $v_{1},v_{2}\in B_{r}$};
\end{equation*}

\item  $\mathbb{H}:\Delta\times\Omega\rightarrow \Omega $ is continuous and $\exists \mathbf{K}_{0}>0$ ($\mathbf{K}_{0}$ is constants) such that
\begin{equation*}
||g(t,s,x)-g(t,s,y)||_{C_{1-\gamma}}\leq \mathbf{K}_{0}||x-y||_{C_{1-\gamma}};
\end{equation*}

\item $g: I^{p}\times \Omega \rightarrow \Omega$ and there exists a constant $\mathbf{Q}_{0}>0$ such that
\begin{equation*}
||g(t_{1},...,t_{p},u_{1}(\cdot))-g(t_{1},...,t_{p},u_{2}(\cdot))||_{C_{1-\gamma}}\leq \mathbf{Q}_{0}||t^{1-\gamma}x-y||_{C_{1-\gamma}};
\end{equation*}
for $u_{1},u_{2}\in C_{1-\gamma }\left( I,B_{r}\right) ;$

\item $-\mathcal{A}$ is the infinitesimal generator of a $C_{0}-$semigroup $\left(\mathbb{S}\left( t\right) \right) _{t\geq 0}$ on $\Omega ;$

\item Consider the following
\begin{eqnarray*}
\mathbf{M} &=&\underset{t\in \left[ 0,a\right] }{\max }\left\Vert \mathbb{S}%
_{\mu ,\nu }\left( t\right) \right\Vert ;  \notag \\
\mathbf{H} &=&\underset{s\in I}{\max }\left\Vert s^{1-\gamma }f\left(
s,0\right) \right\Vert ;  \notag \\
\mathbf{K}_{1} &=&\underset{t_{0}\leq s\leq t\leq t_{0}+a}{\max }\left\Vert
t^{1-\gamma }k\left( t,s,0\right) \right\Vert ;  \notag \\
\widetilde{\mathbf{G}}_{1} &=&\underset{u\in C_{1-\gamma }\left( I,B_{r}\right) }{%
\max }\left\Vert t^{1-\gamma }g\left( t_{1},...,t_{p},u\left( \cdot \right)
\right) \right\Vert ;
\end{eqnarray*}

\item Admit that
\begin{equation}
\mathbf{M}\left( \left\Vert u_{0}\right\Vert _{C_{1-\gamma }}+\widetilde{\mathbf{G}_{1}}+\left( \mathbf{L}_{r}+\mathbf{H}\right) a+\frac{\left( \mathbf{K}_{0}r+\mathbf{K}_{1}\right) }{^{\Gamma \left( \mu \right) ^{2}}}\left( \psi \left( t_{0}+a\right) -\psi \left(t_{0}\right) \right) ^{\mu }a\right) \leq r
\end{equation}
and
\begin{equation}
\mathbf{M}\mathbf{Q}_{0}+\mathbf{M}\mathbf{L}a+\dfrac{\mathbf{M}\mathbf{K}_{0}a}{\Gamma \left( \mu \right) ^{2}}\left( \psi \left( t_{0}+a\right) -\psi \left( t_{0}\right) \right) ^{\mu }<1. 
\end{equation}

\end{enumerate}

\begin{teorema}\label{theo31} The fractional integro-differential equation with nonlocal conditions given by {\rm Eq.(\ref{eq1})}, has a unique solution on I.
\end{teorema}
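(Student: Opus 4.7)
The plan is to reformulate the problem as a fixed-point equation on the Banach space $C_{1-\gamma}(I,\Omega)$ and apply the Banach fixed point theorem (Theorem~4 in the excerpt). Concretely, I would define the operator $\widehat{\Omega}: C_{1-\gamma}(I,B_r)\to C_{1-\gamma}(I,\Omega)$ by
\begin{equation*}
(\widehat{\Omega}u)(t)=\mathbb{S}_{\mu,\nu}(t)\bigl[u_{0}-g(t_{1},\dots,t_{p},u(\cdot))\bigr]+\int_{t_{0}}^{t}\mathcal{K}_{\nu}(t-s)f(s,u(s))\,ds+\frac{1}{\Gamma(\mu)}\int_{t_{0}}^{t}\mathcal{K}_{\nu}(t-s)\int_{t_{0}}^{s}\mathbb{H}^{\mu}(s,\tau)\mathbf{K}(s,\tau,u(\tau))\,d\tau\,ds,
\end{equation*}
so that fixed points of $\widehat{\Omega}$ are exactly the mild solutions of Eq.(1.1) in the sense of Definition~2. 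The goal then reduces to showing (i) $\widehat{\Omega}$ maps the closed ball $C_{1-\gamma}(I,B_r)$ into itself and (ii) $\widehat{\Omega}$ is a strict contraction with respect to $\|\cdot\|_{C_{1-\gamma}}$, after which Theorem~4 immediately yields existence and uniqueness.

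For step (i), I would fix $u\in C_{1-\gamma}(I,B_{r})$, multiply $(\widehat{\Omega}u)(t)$ by $t^{1-\gamma}$ and take norms. The semigroup term is bounded by $\mathbf{M}(\|u_{0}\|_{C_{1-\gamma}}+\widetilde{\mathbf{G}}_{1})$ using assumption~(7). For the $f$-term, I would write $f(s,u(s))=\bigl(f(s,u(s))-f(s,0)\bigr)+f(s,0)$ and use the Lipschitz bound~(3) together with the definitions of $\mathbf{H}$ and $\mathbf{M}$, giving a bound of the form $\mathbf{M}(\mathbf{L}r+\mathbf{H})a$. An analogous split $\mathbf{K}(s,\tau,u(\tau))=\bigl(\mathbf{K}(s,\tau,u(\tau))-\mathbf{K}(s,\tau,0)\bigr)+\mathbf{K}(s,\tau,0)$ handles the double integral, where the inner integral against $\mathbb{H}^{\mu}(s,\tau)$ produces the factor $\Gamma(\mu)^{-1}(\psi(t_{0}+a)-\psi(t_{0}))^{\mu}$ and the outer integral contributes another factor $a$. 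Summing these three contributions and invoking hypothesis~(8) gives $\|\widehat{\Omega}u\|_{C_{1-\gamma}}\le r$.

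For step (ii), given $u_{1},u_{2}\in C_{1-\gamma}(I,B_{r})$, I would subtract, apply the $C_0$-semigroup bound to the first term and the Lipschitz constant $\mathbf{Q}_{0}$ from~(5), apply~(3) and $\mathbf{M}$ to the second term, and apply~(4) together with the integration of $\mathbb{H}^{\mu}$ to the third. Collecting terms yields
\begin{equation*}
\|\widehat{\Omega}u_{1}-\widehat{\Omega}u_{2}\|_{C_{1-\gamma}}\le\Bigl(\mathbf{M}\mathbf{Q}_{0}+\mathbf{M}\mathbf{L}a+\tfrac{\mathbf{M}\mathbf{K}_{0}a}{\Gamma(\mu)^{2}}(\psi(t_{0}+a)-\psi(t_{0}))^{\mu}\Bigr)\|u_{1}-u_{2}\|_{C_{1-\gamma}},
\end{equation*}
and hypothesis~(9) gives a Lipschitz constant $L<1$. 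The Banach fixed point theorem then produces a unique fixed point $u^{\ast}\in C_{1-\gamma}(I,B_{r})$, which is the unique mild solution.

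The main obstacle, as I see it, is the bookkeeping in step (i): the weighted norm $t^{1-\gamma}$ must be tracked carefully through the convolution with $\mathcal{K}_{\nu}$ and through the double integral, and one must ensure that the bounds on $\mathcal{K}_{\nu}$ arising from its defining integral representation $\mathcal{K}_{\nu}(t)=t^{\gamma-1}\mathbf{G}_{\nu}(t)$ combine cleanly with the Wright-function moment identity $\int_{0}^{\infty}\theta\,\mathbf{M}_{\nu}(\theta)\,d\theta=\Gamma(2)/\Gamma(1+\nu)$ to recover the constant $\mathbf{M}$. Once these estimates are in hand, the contraction step is essentially a repetition of the same computation with the constants $\mathbf{H}$, $\mathbf{K}_{1}$, $\widetilde{\mathbf{G}}_{1}$ replaced by zeros, so it is straightforward.
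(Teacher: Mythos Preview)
Your proposal is correct and follows essentially the same approach as the paper: define the mild-solution operator on $C_{1-\gamma}(I,B_r)$, show it is a self-map via the splits $f(s,u(s))=(f(s,u(s))-f(s,0))+f(s,0)$ and $\mathbf{K}(s,\tau,u(\tau))=(\mathbf{K}(s,\tau,u(\tau))-\mathbf{K}(s,\tau,0))+\mathbf{K}(s,\tau,0)$ together with condition~(I.8), then establish the contraction estimate with the same constant $\mathbf{M}\mathbf{Q}_{0}+\mathbf{M}\mathbf{L}a+\mathbf{M}\mathbf{K}_{0}a\,\Gamma(\mu)^{-2}(\psi(t_{0}+a)-\psi(t_{0}))^{\mu}<1$ and invoke the Banach fixed point theorem. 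The paper does not spell out the Wright-function moment computation you flag as the main obstacle; it simply absorbs those bounds into the constant $\mathbf{M}$.
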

\begin{proof} Let $\Omega:=C_{1-\gamma }\left( I,B_{r}\right) $. First, we will need to define the following operator given by
\begin{eqnarray}
\left( \mathfrak{F}\right) \left( t\right)  &=&\mathbb{S}_{\mu ,\nu }\left( t\right) u_{0}-\mathbb{S}_{\mu ,\nu }\left( t\right) g\left( t_{1},...,t_{p},v\left( \cdot \right) \right) +\int_{t_{0}}^{t}\mathcal{K}_{\nu}\left( t-s\right) f\left(
s,v\left( s\right) \right) ds  \notag \\
&&+\frac{1}{\Gamma \left( \mu \right) }\int_{t_{0}}^{t}\mathcal{K}_{\mu}\left( t-s\right) \int_{t_{0}}^{s}\mathbb{H}^{\mu }\left( s,\tau \right) \mathbf{K}\left( s,\tau ,v\left( \tau \right) \right) d\tau ds,\text{ }
\begin{array}{c}
t\in I
\end{array}
\end{eqnarray}
where $ \mathcal{K}_{\nu}\left( t\right) =t^{\nu -1}G_{\nu }\left( t\right)$, $G_{\nu }\left( t\right) =\displaystyle\int_{0}^{\infty }\nu \theta M_{\nu }\left( \theta \right) S_{\mu ,\nu}\left( t^{\nu }\theta \right) d\theta $, $S_{\mu ,\nu }\left( t\right) =I_{\theta }^{\nu \left( 1-\mu \right) }\mathcal{K}_{\nu}\left( t\right)$, $0< \mu \leq 1$ and $0\leq \nu\leq 1$.

Using the CONDITIONS (I.1)-(I.8) as presented previously, we get
\begin{eqnarray*}
&&\left\Vert t^{1-\gamma }\left( \mathfrak{F}v\right) \left( t\right)
\right\Vert  \\
&\leq &\left\Vert t^{1-\gamma }\mathbb{S}_{\mu ,\nu }\left( t\right)
u_{0}\right\Vert +\left\Vert t^{1-\gamma }\mathbb{S}_{\mu ,\nu }\left(
t\right) g\left( t_{1},...,t_{p},v\left( \cdot \right) \right) \right\Vert 
\\
&&+\left\Vert t^{1-\gamma }\int_{t_{0}}^{t}\mathcal{K}_{\mu }\left(
-s\right) f\left( s,v\left( s\right) \right) ds\right\Vert  \\
&&+\left\Vert t^{1-\gamma }\frac{1}{\Gamma \left( \mu \right) }%
\int_{t_{0}}^{t}\mathcal{K}_{\mu }\left( t-s\right) \int_{t_{0}}^{s}%
\mathbb{H}^{\mu }\left( s,\tau \right) \mathbf{K}\left( s,\tau ,v\left(
\tau \right) \right) d\tau ds\right\Vert  \\
\end{eqnarray*}
\begin{eqnarray*}
&\leq &\left\Vert \mathbb{S}_{\mu ,\nu }\left( t\right) \right\Vert
\left\Vert t^{1-\gamma }u_{0}\right\Vert +\left\Vert \mathbb{S}_{\mu
,\nu }\left( t\right) \right\Vert \left\Vert t^{1-\gamma }g\left(
t_{1},...,t_{p},v\left( \cdot \right) \right) \right\Vert  \\
&&+t^{1-\gamma }\left\Vert \int_{t_{0}}^{t}\left\Vert \mathcal{K}_{\mu
}\left( t-s\right) \right\Vert s^{\gamma -1}\left( \left\Vert s^{1-\gamma
}\left( f\left( s,v\left( s\right) \right) -f\left( s,0\right) \right)
\right\Vert +\left\Vert s^{1-\gamma }f\left( s,0\right) \right\Vert \right)
ds\right\Vert  \\
&&+\frac{t^{1-\gamma }}{\Gamma \left( \mu \right) }\int_{t_{0}}^{t}\left%
\Vert \mathcal{K}_{\mu }\left( t-s\right) \right\Vert
\int_{t_{0}}^{s}\tau ^{\gamma -1}\mathbb{H}^{\mu }\left( s,\tau \right)
\times  \\
&&\left( \left\Vert \tau ^{1-\gamma }\left( \mathbf{K}\left( s,\tau ,v\left(
\tau \right) \right) -\mathbf{K}\left( s,\tau ,0\right) \right) \right\Vert
+\left\Vert \tau ^{1-\gamma }\mathbf{K}\left( s,\tau ,0\right) \right\Vert
\right) d\tau ds \\
&\leq &\mathbf{M}\left\Vert u_{0}\right\Vert _{C_{1-\gamma }}+\mathbf{M}%
\widetilde{\mathbf{G}_{1}}+\mathbf{M}\left( \mathbf{L}r+\mathbf{H}\right)
\int_{t_{0}}^{t}ds \\
&&+\frac{\mathbf{M}\left( \mathbf{K}_{0}r+\mathbf{K}_{1}\right) }{\Gamma
\left( \mu \right) }\int_{t_{0}}^{t}\int_{t_{0}}^{s}N^{\mu }\left(
s,\tau \right) d\tau ds \\
&=&\mathbf{M}\left\Vert u_{0}\right\Vert _{C_{1-\gamma }}+\mathbf{M}%
\widetilde{\mathbf{G}_{1}}+\mathbf{M}\left( \mathbf{L}r+\mathbf{H}\right) \left(
t-t_{0}\right)  \\
&&+\frac{\mathbf{M}\left( \mathbf{K}_{0}r+\mathbf{K}_{1}\right) }{\Gamma
\left( \mu \right) }\frac{\left( \psi \left( s\right) -\psi \left(
t_{0}\right) \right) ^{\mu }}{\Gamma \left( \mu \right) }%
\int_{t_{0}}^{t}ds \\
&\leq &\mathbf{M}\left[ \left\Vert u_{0}\right\Vert _{C_{1-\gamma }}+%
\widetilde{\mathbf{G}_{1}}+\left( \mathbf{L}r+\mathbf{H}\right) a+\frac{\left( 
\mathbf{K}_{0}r+\mathbf{K}_{1}\right) \left( \psi \left( t_{0}+a\right)
-\psi \left( t_{0}\right) \right) ^{\mu }a}{\Gamma \left( \mu \right)
^{2}}\right] \leq r;
\end{eqnarray*}
for $v\in \Omega$. Thus, we conclude that $F\Omega \subset \Omega$.

On the other hand, for every $v_{1},v_{2}\in \Omega $ and $t\in I$, we get
\begin{eqnarray*}
&&\left\Vert t^{1-\gamma }\left( \left(  \mathfrak{F}v_{1}\right) \left( t\right)
-\left(  \mathfrak{F}v_{2}\right) \left( t\right) \right) \right\Vert   \notag \\
&\leq &\left\Vert \mathbb{S}_{\mu ,\nu }\left( t\right) \right\Vert \left\Vert
t^{1-\gamma }\left( g\left( t_{1},...,t_{1},v_{1}\left( \cdot \right)
\right) -g\left( t_{1},...,t_{1},v_{2}\left( \cdot \right) \right) \right)
\right\Vert   \notag \\
&&+t^{1-\gamma }\int_{t_{0}}^{t}\left\Vert \mathcal{K}_{\mu}\left( t-s\right)
\right\Vert \left\Vert f\left( s,v_{1}\left( s\right) \right) -f\left(
s,v_{2}\left( s\right) \right) \right\Vert ds  \notag \\
&&+\frac{t^{1-\gamma }}{\Gamma \left( \mu \right) }\int_{t_{0}}^{t}\left%
\Vert \mathcal{K}_{\mu}\left( t-s\right) \right\Vert \int_{t_{0}}^{s}\mathbb{H}^{\mu
}\left( s,\tau \right) \left\Vert \mathbf{K}\left( s,\tau ,v_{1}\left( \tau \right)
\right) -\mathbf{K}\left( s,\tau ,v_{2}\left( \tau \right) \right) \right\Vert d\tau
ds  \notag \\
&\leq &\mathbf{M}\mathbf{Q}_{0}\left\Vert v_{1}-v_{2}\right\Vert _{C_{1-\gamma }}+\mathbf{M}\mathbf{L}\left\Vert
v_{1}-v_{2}\right\Vert _{C_{1-\gamma }}\int_{t_{0}}^{t}ds \notag\\
&&+\frac{\mathbf{M}\mathbf{K}_{0}}{%
\Gamma \left( \mu \right) }\left\Vert v_{1}-v_{2}\right\Vert
_{C_{1-\gamma }}\int_{t_{0}}^{t}\int_{t_{0}}^{s}\mathbb{H}^{\mu }\left( s,\tau
\right) d\tau ds  \notag \\
&\leq &\left( \mathbf{M}\mathbf{Q}_{0}+\mathbf{M}\mathbf{L}a+\frac{\mathbf{M}\mathbf{K}_{0}a}{\Gamma \left( \mu \right) ^{2}}%
\left( \psi \left( t_{0}+a\right) -\psi \left( t_{0}\right) \right) ^{\mu
}\right) \left\Vert v_{1}-v_{2}\right\Vert _{C_{1-\gamma }}.
\end{eqnarray*}

If we take $q:=\mathbf{M}\mathbf{Q}_{0}+\mathbf{M}\mathbf{L}a+\dfrac{\mathbf{M}\mathbf{K}_{0}a}{\Gamma \left( \mu \right) ^{2}}\left( \psi \left( t_{0}+a\right) -\psi \left( t_{0}\right) \right) ^{\mu
}$, then 
\begin{equation}
\left\Vert  \mathfrak{F}v_{1}- \mathfrak{F}v_{2}\right\Vert_{C_{1-\gamma }} =\underset{t\in I}{\sup }\left\Vert
t^{1-\gamma }\left( \left(  \mathfrak{F}v_{1}\right) \left( t\right) -\left(
 \mathfrak{F}v_{2}\right) \left( t\right) \right) \right\Vert \leq q\left\Vert
v_{1}-v_{2}\right\Vert _{C_{1-\gamma }}
\end{equation}
with $0<q<1$.

So, we have $\mathfrak{F}$, is a contraction on the space $\Omega $. In this sense, applying the Banach fixed point theorem, we concluded that, the operator $\mathfrak{F}$ admits unique fixed point in space $\Omega $ and this sense is the mild solution of problem Eq.(\ref{eq1}) on $I.$
\end{proof}

\begin{corolario} Assume that the {\rm CONDITIONS I} are true and taking the limit $\nu\rightarrow 1$ on both sides of the {\rm Eq.(\ref{eq4})} then the Caputo fractional integro-differential equation with nonlocal conditions given by {\rm Eq.(\ref{eq1})} has a unique solution on $I$.
\end{corolario}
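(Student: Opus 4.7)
The plan is to reduce this corollary to a direct application of Theorem \ref{theo31} by carefully passing to the limit $\nu \to 1$ in the Hilfer setting. First I would verify that the Hilfer fractional derivative $^{\mathbf{H}}\mathbb{D}^{\mu,\nu}_{0+}$ reduces to the Caputo fractional derivative as $\nu \to 1$: from Eq.(\ref{eq4}) with $n=1$ and $\nu=1$, one obtains $I^{1-\mu}_{t_{0}+}(d/dt)u(t)$, which is precisely the Caputo derivative of order $\mu$. Correspondingly, the parameter $\gamma = \mu + \nu(1-\mu)$ tends to $1$, so the weighted space $C_{1-\gamma}(J,\Omega)$ reduces to the space $C(J,\Omega)$ of continuous functions and the nonlocal initial condition $I^{1-\gamma}_{t_{0}+}u(t_{0+}) + g(\cdots) = u_0$ collapses to $u(t_0) + g(\cdots) = u_0$, matching the standard Caputo nonlocal formulation.

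Second I would verify that the CONDITIONS I and the structural hypotheses of Theorem \ref{theo31} are preserved under this limit. The Lipschitz constants $\mathbf{L}, \mathbf{K}_0, \mathbf{Q}_0$, the bounds $\mathbf{H}, \mathbf{K}_1, \widetilde{\mathbf{G}}_1$, and the assumption that $-\mathcal{A}$ generates the $C_0$-semigroup $\mathbb{S}(t)$ are all independent of $\nu$. The only $\nu$-dependent quantity is $\mathbf{M} = \max \|\mathbb{S}_{\mu,\nu}(t)\|$, which is controlled uniformly near $\nu = 1$ via the Wright-function identity $\int_{0}^{\infty} \mathbf{M}_{\nu}(\theta)\, d\theta = 1$ together with the boundedness of $\|\mathbb{S}(t)\|$ on $[0,a]$. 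Consequently the contraction constant $q := \mathbf{M}\mathbf{Q}_0 + \mathbf{M}\mathbf{L}a + \mathbf{M}\mathbf{K}_0 a (\psi(t_0+a) - \psi(t_0))^{\mu} / \Gamma(\mu)^2$ remains strictly less than $1$ in the limit.

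Third I would set up the operator $\mathfrak{F}$ exactly as in the proof of Theorem \ref{theo31}, with the limiting evolution operators $\mathcal{K}_{\mu}$, $G_{\mu}$, $\mathbb{S}_{\mu,1}$ obtained from their Wright-function integral representations as $\nu \to 1$. The a priori self-mapping estimate $\mathfrak{F}(\Omega) \subset \Omega$ and the Lipschitz bound $\|\mathfrak{F}v_1 - \mathfrak{F}v_2\|_{C_{1-\gamma}} \leq q \|v_1 - v_2\|_{C_{1-\gamma}}$ then carry over verbatim from the Hilfer proof, and the Banach fixed point theorem produces a unique fixed point, which is by definition the mild solution of the Caputo fractional integro-differential equation on $I$.

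The main obstacle, though essentially technical, is to rigorously justify the interchange of the limit $\nu \to 1$ with the integrals defining $\mathbb{S}_{\mu,\nu}$, $G_{\nu}$, $\mathcal{K}_{\nu}$: this requires continuity of the Wright function $\mathbf{M}_{\nu}(\theta)$ in the index $\nu$ together with a dominated convergence argument, so that the Hilfer semigroup-type operators converge in norm to their Caputo counterparts without degeneracy at the boundary. Once this convergence is established, the corollary follows immediately from the same fixed-point argument already carried out in Theorem \ref{theo31}.
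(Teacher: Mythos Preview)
Your proposal is correct and follows the same route as the paper: both recognize the corollary as an immediate specialization of Theorem~\ref{theo31} to the case $\nu\to 1$, in which the Hilfer derivative collapses to the Caputo derivative. The paper's own proof is literally the single sentence ``Direct consequence of Theorem~\ref{theo31},'' so all of the limit-justification machinery you outline (continuity of $\mathbf{M}_\nu$ in the index, dominated convergence for $\mathbb{S}_{\mu,\nu}$, $G_\nu$, $\mathcal{K}_\nu$, uniform control of $\mathbf{M}$ near $\nu=1$) goes well beyond what the authors actually supply; they simply treat $\nu=1$ as a parameter value already covered by the general theorem rather than as a genuine limiting procedure requiring separate analysis.
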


\begin{proof}
Direct consequence of Theorem \ref{theo31}.
\end{proof}

\subsection{Existence and uniquenesses of a strong solution to FIE}

As before, the conditions to obtain the existence and uniqueness of mild solutions, here we will do the same procedure. In this sense, we have
\subsubsection*{CONDITIONS II}
\begin{enumerate}
\item $\Omega $ is a reflexive Banach space with norm $\left\Vert{\cdot}\right\Vert _{C_{1-\gamma }}$ and $u_{0}\in \Omega$;

\item $0\leq t_{0}\leq t_{1}<...<t_{p}\leq t_{0}+a$ and $B_{r}:=\left\{ v:\left\Vert v\right\Vert _{C_{1-\gamma }}\leq r\right\} \subset \Omega ;$

\item $f:I\times \Omega \rightarrow \Omega $ is continuous in $t$ on $I$ and there exists a constant $\mathbf{L}>0$ such that
\begin{equation}
\left\Vert f\left( s_{1},v_{1}\right) -f\left( s_{2},v_{2}\right) \right\Vert _{C_{1-\gamma }}\leq \mathbf{L}\left( \left\Vert s_{1}-s_{2}\right\Vert _{C_{1-\gamma }}+\left\Vert v_{1}-v_{2}\right\Vert _{C_{1-\gamma }}\right) 
\end{equation}
for $s_{1},s_{2}\in I\text{ and }v_{1},v_{2}\in B_{r}$;

\item $\mathbf{K}:\Delta \times \Omega \rightarrow \Omega $ is continuous and there exists a constant $\mathbf{K}_{0}>0$ such that
\begin{equation}
\left\Vert \mathbf{K}\left( t_{1},s,x\right) -\mathbf{K}\left( t_{2},s,y\right) \right\Vert _{C_{1-\gamma }}\leq \mathbf{K}_{0}\left( \left\vert t_{1}-t_{2}\right\vert +\left\Vert x_{1}-x_{2}\right\Vert _{C_{1-\gamma }}\right) ;
\end{equation}

\item $g:I^{p}\times \Omega \rightarrow \Omega $ and there exists a constant $\mathbf{G}_{0}>0$ such that 
\begin{equation}
\left\Vert g\left( t_{1},...,t_{p},u\left( \cdot \right) \right) -g\left( t_{1},...,t_{p},u\left( \cdot \right) \right) \right\Vert _{C_{1-\gamma }}\leq \mathbf{G}_{0}\underset{t\in I}{\sup }\left\Vert t^{1-\gamma }\left(
u_{1}\left( t\right) -u_{2}\left( t\right) \right) \right\Vert ;
\end{equation}
for $u_{1},u_{2}\in C_{1-\gamma }\left( I,B_{r}\right) $ and $g\left(t_{1},...,t_{p}\right) \in \mathcal{D}\left( \mathcal{A}\right) $;

\item $-\mathcal{A}$ is the infinitesimal generator of a $C_{0}-$semigroup $\left(\mathbb{S}\left( t\right) \right) _{t\geq 0}\in \mathcal{D}\left( \mathcal{A}\right)$;

\item Consider $u_{0}\in \mathcal{D}\left( \mathcal{A}\right)$;

\item Admit that
$\mathbf{M}\left( \mathbf{Q}_{0}+\mathbf{L}a+\dfrac{\mathbf{K}_{0}a}{\Gamma \left( \mu \right) ^{2}}\left( \psi \left( t_{0}+a\right) -\psi \left( t_{0}\right) \right) ^{\mu}\right) <1.$

\end{enumerate}

\begin{definition} A function $u$ is said to be a strong solution of problem {\rm Eq.(\ref{eq1})} on $I$ if $u$ is a.e differentiable on $I$
\begin{equation}
\begin{array}{c}
^{\mathbf{H}}\mathbb{D}_{t_0+}^{\mu,\nu}u\left( t\right) \in L^{\prime }\left(
(t_{0+},t_{0+}+a],\Omega \right)  \\ 
I_{t_{0+}}^{1-\gamma }u\left( t\right) +g\left( t_{1},...,t_{p},u\left(
\cdot \right) \right) =u_{0}%
\end{array}
\end{equation}
and 
\begin{equation}
^{\mathbf{H}}\mathbb{D}_{t_0+}^{\mu,\nu}u\left( t\right) +\mathcal{A}\left( t\right) =f\left(t,u\left( t\right) \right) +\frac{1}{\Gamma \left( \mu \right) }\int_{t_{0}}^{t}\mathbb{H}^{\mu }\left( t,s\right) \mathbf{K}\left( t,s,v\left( s\right) \right) ds,
\end{equation}
$t\in (t_{0},t_{0}+a]$.
\end{definition}

\begin{teorema}\label{theo34} The fractional integro-differential equation with nonlocal conditions given by {\rm Eq.(\ref{eq1})}, has a strong solution on $I$.
\end{teorema}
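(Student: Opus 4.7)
The plan is to bootstrap from the mild solution guaranteed by Theorem \ref{theo31} and upgrade it to a strong solution by exploiting the reflexivity of $\Omega$, the additional Lipschitz-in-time hypotheses in CONDITIONS II, and the condition $u_{0}\in\mathcal{D}(\mathcal{A})$. First, I would observe that CONDITIONS II imply the weaker CONDITIONS I (in particular the contraction constant in CONDITIONS II.8 matches the one needed for Banach's fixed point argument), so Theorem \ref{theo31} yields a unique mild solution $u\in C_{1-\gamma}(I,B_{r})$ satisfying the integral identity \eqref{eq34}. The task then reduces to showing that this $u$ is a.e.\ differentiable and that it satisfies the Hilfer equation pointwise a.e.

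Next, I would establish that $u$ is Lipschitz continuous on $I$ with values in $\Omega$. For $t,t+h\in I$ with $h>0$ small, I would estimate $\|t^{1-\gamma}(u(t+h)-u(t))\|$ by splitting the representation into the three contributions from \eqref{eq34}: the semigroup term $\mathbb{S}_{\mu,\nu}(t)[u_{0}-g(t_{1},\ldots,t_{p},u(\cdot))]$, the $f$-integral, and the double integral involving $\mathbf{K}$. Because $u_{0}\in\mathcal{D}(\mathcal{A})$ and $g(t_{1},\ldots,t_{p},u(\cdot))\in\mathcal{D}(\mathcal{A})$ by CONDITIONS II.5 and II.7, the first term is differentiable with controlled increment. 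The second term is handled by using the joint Lipschitz hypothesis on $f$ (CONDITIONS II.3) to pull the increment in $s$ and in $u$ outside the integral; the third term is handled analogously using CONDITIONS II.4 for $\mathbf{K}$, together with the explicit form of $\mathbb{H}^{\mu}(s,\tau)$. Collecting the bounds produces an inequality of the form
\begin{equation*}
\|t^{1-\gamma}(u(t+h)-u(t))\| \leq C_{1}|h| + C_{2}\int_{t_{0}}^{t}\psi'(s)(\psi(t)-\psi(s))^{\mu-1}\|s^{1-\gamma}(u(s+h)-u(s))\|\,ds,
\end{equation*}
to which Theorem \ref{te1} (the Gronwall inequality) applies, yielding $\|u(t+h)-u(t)\|_{C_{1-\gamma}}\leq C|h|$ for some constant $C$ independent of $h$.

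Having a Lipschitz map $u:I\to\Omega$ into a reflexive Banach space, a standard Komura-type result (used in Pazy's argument cited as \cite{porco}) gives that $u$ is a.e.\ differentiable on $I$ and the derivative belongs to $L^{\infty}(I,\Omega)\subset L^{1}(I,\Omega)$. Combined with the Lipschitz regularity and the fact that $\mathcal{A}$ is closed (being the negative generator of the $C_{0}$-semigroup), differentiating the integral representation \eqref{eq34} a.e.\ in $t$ and applying the semigroup identity $\tfrac{d}{dt}\mathbb{S}_{\mu,\nu}(t)x=-\mathcal{A}\mathbb{S}_{\mu,\nu}(t)x$ on $\mathcal{D}(\mathcal{A})$ produces
\begin{equation*}
{}^{\mathbf{H}}\mathbb{D}_{t_{0}+}^{\mu,\nu}u(t)+\mathcal{A}u(t)=f(t,u(t))+\frac{1}{\Gamma(\mu)}\int_{t_{0}}^{t}\mathbb{H}^{\mu}(t,s)\mathbf{K}(t,s,u(s))\,ds
\end{equation*}
for a.e.\ $t\in I$, while the nonlocal initial condition is inherited from \eqref{eq34} by taking $I^{1-\gamma}_{t_{0}+}$ on both sides. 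Together with $u(t)\in\mathcal{D}(\mathcal{A})$ for a.e.\ $t$, this is exactly the definition of a strong solution.

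The main obstacle I anticipate is the Lipschitz estimate for the increment of the fractional integral term containing $\mathbf{K}$: the singular kernel $\mathbb{H}^{\mu}(s,\tau)=\psi'(\tau)(\psi(s)-\psi(\tau))^{\mu-1}$ interacts with the translation $s\mapsto s+h$ inside both the outer and inner integrals, and one must separate the contribution from shifting the upper limit of integration (which produces the driving term $C_{1}|h|$) from the contribution of the argument shift in $\mathbf{K}$ itself (which produces the Gronwall kernel). Careful use of the change of variable $\tau\mapsto\tau-h$ together with the monotonicity of $\psi$ is what makes the Gronwall closure above valid; everything downstream is then a routine application of Theorem \ref{te1} and of reflexivity.
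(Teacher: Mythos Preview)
Your strategy coincides with the paper's through the essential step: obtain the mild solution from Theorem~\ref{theo31}, derive a Gronwall-type inequality for $\|t^{1-\gamma}(u(t+h)-u(t))\|$ of exactly the shape you write, and conclude Lipschitz continuity of $u$ on $I$. The difference is only in how the final upgrade to a strong solution is packaged. The paper does not differentiate the representation \eqref{eq34} directly; instead, once $u$ is Lipschitz it freezes $u$ on the right-hand side, notes that $t\mapsto f(t,u(t))+\tfrac{1}{\Gamma(\mu)}\int_{t_0}^{t}\mathbb{H}^{\mu}(t,s)\mathbf{K}(t,s,u(s))\,ds$ is Lipschitz on $I$, and invokes Theorem~1 (the cited black-box result for the \emph{linear} problem with Lipschitz source and initial datum $u_{0}-g(t_{1},\ldots,t_{p},u(\cdot))\in\mathcal{D}(\mathcal{A})$) to produce a strong solution $v$; since $v$ satisfies the same mild formula as $u$, it follows that $v=u$ and hence $u$ is strong. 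This packaging is safer than your last step: the identity you invoke, $\tfrac{d}{dt}\mathbb{S}_{\mu,\nu}(t)x=-\mathcal{A}\mathbb{S}_{\mu,\nu}(t)x$, is the generator law for the underlying $C_{0}$-semigroup $\mathbb{S}(t)$, not for the Hilfer resolvent operators $\mathbb{S}_{\mu,\nu}$ and $\mathcal{K}_{\nu}$, so direct differentiation of \eqref{eq34} does not literally yield the Hilfer equation that way. Your Komura argument does give a.e.\ differentiability of $u$, but to close the loop you would still need the fractional analogue of that identity --- which is precisely what Theorem~1 encapsulates.
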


\begin{proof} By satisfying all the conditions of Theorem \ref{theo31}, the problem {\rm Eq.(\ref{eq1})} admits a unique mild solution in $C_\gamma \left (I, \right)$ which we denote by $u$.

Now, we shall show that this mild solution is a strong solution of problem {\rm Eq.(\ref{eq1})} on $I$. For any $t\in I$, we have
\begin{eqnarray}
&&u\left( t+h\right) -u\left( t\right)   \nonumber \\
&=&\mathbb{S}_{\mu ,\nu }\left( t+h\right) u_{0}-\mathbb{S}_{\mu ,\nu }\left(
t\right) u_{0}-\mathbb{S}_{\mu ,\nu }\left( t+h\right) g\left(
t_{1},...,t_{2},u\left( \cdot \right) \right)   \nonumber \\
&&+\mathbb{S}_{\mu ,\nu }\left( t\right) g\left( t_{1},...,t_{2},u\left( \cdot
\right) \right) +\int_{t_{0}}^{t_{0}+h}\mathcal{K}_{\mu }\left( t+h-s\right)
f\left( s,u\left( s\right) \right) ds  \nonumber \\
&&+\int_{t_{0}+h}^{t+h}\mathcal{K}_{\mu }\left( t+h-s\right) f\left( s,u\left(
s\right) \right) ds-\int_{t_{0}}^{t}\mathcal{K}_{\mu }\left( t-s\right) f\left(
s,u\left( s\right) \right) ds  \nonumber \\
&&+\frac{1}{\Gamma \left( \mu \right) }\int_{t_{0}}^{t_{0}+h}\mathcal{K}_{\mu
}\left( t+h-s\right) \int_{t_{0}}^{s}\mathbb{H}^{\mu }\left( s,\tau \right)
\mathbf{K}\left( s,\tau ,u\left( \tau \right) \right) d\tau ds  \nonumber \\
&&+\frac{1}{\Gamma \left( \mu \right) }\int_{t_{0}+h}^{t+h}\mathcal{K}_{\mu
}\left( t+h-s\right) \int_{t_{0}}^{s}\mathbb{H}^{\mu }\left( s,\tau \right)
\mathbf{K}\left( s,\tau ,u\left( \tau \right) \right) d\tau ds  \nonumber \\
&&-\frac{1}{\Gamma \left( \mu \right) }\int_{t_{0}}^{t}\mathcal{K}_{\mu }\left(
t-s\right) \int_{t_{0}}^{s}\mathbb{H}^{\mu }\left( s,\tau \right) \mathbf{K}\left( s,\tau
,u\left( s\right) \right) ds  \nonumber \\
&=&\mathbb{S}_{\mu ,\nu }\left( t+h\right) u_{0}-\mathbb{S}_{\mu ,\nu }\left(
t\right) u_{0}-\mathbb{S}_{\mu ,\nu }\left( t+h\right) g\left(
t_{1},...,t_{2},u\left( \cdot \right) \right)   \nonumber \\
&&+\mathbb{S}_{\mu ,\nu }\left( t\right) g\left( t_{1},...,t_{2},u\left( \cdot
\right) \right) +\int_{t_{0}}^{t_{0}+h}\mathcal{K}_{\mu }\left( t+h-s\right) \left[
f\left( s,u\left( s\right) \right) -f\left( s,0\right) \right] ds  \nonumber
\\
&&+\int_{t_{0}}^{t_{0}+h}\mathcal{K}_{\mu }\left( t+h-s\right) f\left( s,0\right) ds
\nonumber \\
&&+\int_{t_{0}}^{t}\mathcal{K}_{\mu }\left( t-s\right) \left[ f\left( s+h,u\left(
s+h\right) \right) -f\left( s,u\left( s\right) \right) \right] ds  \nonumber
\\
&&+\frac{1}{\Gamma \left( \mu \right) }\int_{t_{0}}^{t_{0}+h}\mathcal{K}_{\mu
}\left( t+h-s\right) \int_{t_{0}}^{s}\mathbb{H}^{\mu }\left( s,\tau \right) \left[
\mathbf{K}\left( s,\tau ,u\left( \tau \right) \right) -\mathbf{K}\left( s,\tau ,0\right) %
\right] d\tau ds  \nonumber \\
&&+\frac{1}{\Gamma \left( \mu \right) }\int_{t_{0}}^{t_{0}+h}\mathcal{K}_{\mu
}\left( t+h-s\right) \int_{t_{0}}^{s}\mathbb{H}^{\mu }\left( s,\tau \right)
\mathbf{K}\left( s,\tau ,0\right) d\tau ds  \nonumber \\
&&+\frac{1}{\Gamma \left( \mu \right) }\int_{t_{0}}^{s+h}\mathbb{H}^{\mu
}\left( s+h,\tau \right) \left[ \mathbf{K}\left( s+h,\tau ,u\left( \tau \right)
\right) -\mathbf{K}\left( s+h,\tau ,0\right) \right] d\tau ds  \nonumber \\
&&+\frac{1}{\Gamma \left( \mu \right) }\int_{t_{0}}^{t}\mathcal{K}_{\mu }\left(
t-s\right) \int_{t_{0}}^{s+h}\mathbb{H}^{\mu }\left( s+h,\tau \right) \mathbf{K}\left(
s+h,\tau ,0\right) d\tau ds  \nonumber \\
&&+\frac{1}{\Gamma \left( \mu \right) }\int_{t_{0}}^{t}\mathcal{K}_{\mu }\left(
t-s\right) \int_{t_{0}}^{s}\mathbb{H}^{\mu }\left( s,\tau \right) \left[ \mathbf{K}\left(
s+h,\tau ,0\right) -\mathbf{K}\left( s,\tau ,u\left( \tau \right) \right) \right]
d\tau ds  \nonumber \\
&&+\frac{1}{\Gamma \left( \mu \right) }\int_{t_{0}}^{t}\mathcal{K}_{\mu }\left(
t-s\right) \int_{t_{0}}^{s}\mathbb{H}^{\mu }\left( s,\tau \right) \mathbf{K}\left( s+h,\tau
,0\right) d\tau ds.
\end{eqnarray}

Using our assumptions (CONDITIONS II) we observe that
\begin{eqnarray*}
&&\left\Vert t^{1-\gamma }\left( u\left( t+h\right) -u\left( t\right)
\right) \right\Vert   \nonumber \\
&\leq &\left\Vert \mathbb{S}_{\mu ,\nu }\left( t+h\right) \right\Vert \left\Vert
t^{1-\gamma }u_{0}\right\Vert +\left\Vert \mathbb{S}_{\mu ,\nu }\left(
t+h\right) \right\Vert \left\Vert t^{1-\gamma }g\left(
t_{1},...,t_{2},u\left( \cdot \right) \right) \right\Vert   \nonumber \\
&&+\left\Vert \mathbb{S}_{\mu ,\nu }\left( t\right) \right\Vert \left\Vert
t^{1-\gamma }u_{0}\right\Vert +\left\Vert \mathbb{S}_{\mu ,\nu }\left( t\right)
\right\Vert \left\Vert t^{1-\gamma }g\left( t_{1},...,t_{2},u\left( \cdot
\right) \right) \right\Vert   \nonumber \\
&&+t^{1-\gamma }\int_{t_{0}}^{t_{0}+h}\left\Vert \mathcal{K}_{\mu }\left(
t+h-s\right) \right\Vert s^{\gamma -1}\left\Vert s^{1-\gamma }\left[ f\left(
s,u\left( s\right) \right) -f\left( s,0\right) \right] \right\Vert ds 
\nonumber \\
&&+t^{1-\gamma }\int_{t_{0}}^{t_{0}+h}\left\Vert \mathcal{K}_{\mu }\left(
t+h-s\right) \right\Vert s^{\gamma -1}\left\Vert s^{1-\gamma }f\left(
s,0\right) \right\Vert ds  \nonumber \\
&&+t^{1-\gamma }\int_{t_{0}}^{t}\left\Vert \mathcal{K}_{\mu }\left( t-s\right)
\right\Vert t^{\gamma -1}\left\Vert s^{1-\gamma }\left[ f\left( s+h,u\left(
s+h\right) \right) -f\left( s,u\left( s\right) \right) \right] \right\Vert ds
\nonumber \\
&&+\frac{t^{1-\gamma }}{\Gamma \left( \mu \right) }\int_{t_{0}}^{t_{0}+h}%
\left\Vert \mathcal{K}_{\mu }\left( t+h-s\right) \right\Vert
\int_{t_{0}}^{s}\mathbb{H}^{\mu }\left( s,\tau \right) \tau ^{\gamma -1}  \nonumber
\\
&&\times \left\Vert \tau ^{1-\gamma }\left[ \mathbf{K}\left( s,\tau ,u\left( \tau
\right) \right) -\mathbf{K}\left( s,\tau ,0\right) \right] \right\Vert d\tau ds 
\nonumber \\
&&+\frac{t^{1-\gamma }}{\Gamma \left( \mu \right) }\int_{t_{0}}^{t_{0}+h}%
\left\Vert \mathcal{K}_{\mu }\left( t+h-s\right) \right\Vert
\int_{t_{0}}^{s}\mathbb{H}^{\mu }\left( s,\tau \right) \tau ^{\gamma -1}\left\Vert
\tau ^{1-\gamma }\mathbf{K}\left( s,\tau ,0\right) \right\Vert d\tau ds  \nonumber \\
&&+\frac{t^{1-\gamma }}{\Gamma \left( \mu \right) }\int_{t_{0}}^{t}\left%
\Vert \mathcal{K}_{\mu }\left( t-s\right) \right\Vert \int_{t_{0}}^{s+h}\mathbb{H}^{\mu
}\left( s+h,\tau \right) \tau ^{\gamma -1}\left\Vert \tau ^{1-\gamma
}\mathbf{K}\left( s,\tau ,0\right) \right\Vert d\tau ds  \nonumber \\
&&+\frac{t^{1-\gamma }}{\Gamma \left( \mu \right) }\int_{t_{0}}^{t}\left%
\Vert \mathcal{K}_{\mu }\left( t-s\right) \right\Vert \int_{t_{0}}^{s+h}\mathbb{H}^{\mu
}\left( s+h,\tau \right) \tau ^{\gamma -1}  \nonumber \\
&&\times \left\Vert \tau ^{1-\gamma }\left[ \mathbf{K}\left( s+h,\tau ,u\left( \tau
\right) \right) -\mathbf{K}\left( s+h,\tau ,0\right) \right] \right\Vert d\tau ds 
\nonumber \\
&&+\frac{t^{1-\gamma }}{\Gamma \left( \mu \right) }\int_{t_{0}}^{t}\left%
\Vert \mathcal{K}_{\mu }\left( t-s\right) \right\Vert \int_{t_{0}}^{s+h}\mathbb{H}^{\mu
}\left( s,\tau \right) \tau ^{\gamma -1}\left\Vert \tau ^{1-\gamma }\mathbf{K}\left(
s+h,\tau ,0\right) \right\Vert d\tau ds  \nonumber \\
&&+\frac{t^{1-\gamma }}{\Gamma \left( \mu \right) }\int_{t_{0}}^{t}\left%
\Vert \mathcal{K}_{\mu }\left( t-s\right) \right\Vert \int_{t_{0}}^{s}\mathbb{H}^{\mu
}\left( s,\tau \right) \tau ^{\gamma -1}  \nonumber \\
&&\times \left\Vert \tau ^{1-\gamma }\left[ \mathbf{K}\left( s+h,\tau ,0\right)
-\mathbf{K}\left( s+h,\tau ,u\left( \tau \right) \right) \right] \right\Vert d\tau ds
\nonumber \\
&&-\frac{t^{1-\gamma }}{\Gamma \left( \mu \right) }\int_{t_{0}}^{t}\left%
\Vert \mathcal{K}_{\mu }\left( t-s\right) \right\Vert \int_{t_{0}}^{s}\mathbb{H}^{\mu
}\left( s,\tau \right) \tau ^{\gamma -1}\left\Vert \tau ^{1-\gamma }\mathbf{K}\left(
s+h,\tau ,0\right) \right\Vert d\tau ds  \nonumber \\
&\leq &\mathbf{M}\left\Vert u_{0}\right\Vert _{C_{1-\gamma }}+\mathbf{M}\left\Vert
u_{0}\right\Vert _{C_{1-\gamma }}+\mathbf{M}\widetilde{\mathbf{G}_{1}}+\mathbf{M}\widetilde{\mathbf{G}_{1}}%
+\mathbf{ML}rh+\mathbf{ML}h  \nonumber \\
&&+\mathbf{ML}ha+\mathbf{ML}\int_{t_{0}}^{t}\mathbb{H}^{\mu }\left( t,s\right) \left\Vert
s^{1-\gamma }\left( u\left( s+h\right) -u\left( s\right) \right) \right\Vert
ds  \nonumber \\
&&+\frac{M}{\Gamma \left( \mu \right) }\mathbf{K}_{0}r\int_{t_{0}}^{t_{0}+h}%
\int_{t_{0}}^{s}\mathbb{H}^{\mu }\left( s,\tau \right) d\tau ds+\frac{\mathbf{M}}{\Gamma
\left( \mu \right) }\mathbf{K}_{1}\int_{t_{0}}^{t_{0}+h}\int_{t_{0}}^{s}d\tau ds 
\nonumber \\
&&+\frac{\mathbf{M}}{\Gamma \left( \mu \right) }\mathbf{K}_{0}r\int_{t_{0}}^{t}%
\int_{t_{0}}^{s+h}\mathbb{H}^{\mu }\left( s+h,\tau \right) d\tau ds\nonumber \\
&&+\frac{\mathbf{M}}{\Gamma \left( \mu \right) }\mathbf{K}_{1}\int_{t_{0}}^{t}\int_{t_{0}}^{s+h}\mathbb{H}^{%
\mu }\left( s+h,\tau \right) d\tau ds  
\end{eqnarray*}
\begin{eqnarray*}
\nonumber \\
&&+\frac{\mathbf{M}}{\Gamma \left( \mu \right) }\mathbf{K}_{0}r\int_{t_{0}}^{t}%
\int_{t_{0}}^{s}\mathbb{H}^{\mu }\left( s,\tau \right) d\tau ds-\frac{\mathbf{M}}{\Gamma
\left( \mu \right) }\mathbf{K}_{1}\int_{t_{0}}^{t}\int_{t_{0}}^{s}d\tau ds 
\nonumber \\
&\leq &2\mathbf{M}\left\Vert u_{0}\right\Vert _{C_{1-\gamma }}+2\mathbf{M}\widetilde{\mathbf{G}_{1}}%
+\mathbf{ML}rh+\mathbf{MN}h+\mathbf{ML}ha \nonumber \\
&&+\mathbf{ML}\int_{t_{0}}^{t}\mathbb{H}^{\mu }\left( t,s\right) \left\Vert s^{1-\gamma
}\left( u\left( s+h\right) -u\left( s\right) \right) \right\Vert ds 
\nonumber \\
&&+\frac{\mathbf{M}\mathbf{K}_{0}r}{\Gamma \left( \mu +1\right) }\left( \psi \left(
t_{0}+h\right) -\psi \left( t_{0}\right) \right) ^{\mu }+\frac{\mathbf{M}\mathbf{K}_{0}r}{%
\Gamma \left( \mu +1\right) }\left( \psi \left( t_{0}+h+a\right) -\psi
\left( t_{0}\right) \right) ^{\mu }  \nonumber \\
&&+\frac{\mathbf{M}\mathbf{K}_{0}r}{\Gamma \left( \mu +1\right) }\left( \psi \left(
t_{0}+a\right) -\psi \left( t_{0}\right) \right) ^{\mu }+\frac{\mathbf{M}\mathbf{K}_{1}}{%
\Gamma \left( \mu +1\right) }\left( \psi \left( t_{0}+h+a\right) -\psi
\left( t_{0}\right) \right) ^{\mu }  \nonumber \\
&&-\frac{\mathbf{M}\mathbf{K}_{1}a}{2\Gamma \left( \mu \right) }+\frac{\mathbf{M}\mathbf{K}_{1}a^{2}}{2\Gamma
\left( \mu \right) }  \nonumber \\
&\leq &\widetilde{\mathbf{P}}h+\mathbf{ML}\int_{t_{0}}^{t}\mathbb{H}^{\mu }\left( t,s\right)
\left\Vert s^{1-\gamma }\left( u\left( s+h\right) -u\left( s\right) \right)
\right\Vert ds
\end{eqnarray*}
where
$\widetilde{\mathbf{P}}:=2\mathbf{M}\left\Vert u_{0}\right\Vert _{C_{1-\gamma }}+2\mathbf{M}\widetilde{%
\mathbf{G}_{1}}+\mathbf{ML}r+\mathbf{MN}+\mathbf{ML}a-\dfrac{\mathbf{M}\mathbf{K}_{1}a}{2\Gamma \left( \mu \right) }+\dfrac{%
\mathbf{M}\mathbf{K}_{1}a^{2}}{2\Gamma \left( \mu \right) }+\dfrac{\mathbf{M}\mathbf{K}_{0}r}{\Gamma \left(
\mu +1\right) }\left( \psi \left( t_{0}+h\right) -\psi \left(
t_{0}\right) \right) ^{\mu }+\dfrac{\mathbf{M}\mathbf{K}_{0}r}{\Gamma \left( \mu
+1\right) }\left( \psi \left( t_{0}+h+a\right) -\psi \left( t_{0}\right)
\right) ^{\mu }+\\\dfrac{\mathbf{M}\mathbf{K}_{0}r}{\Gamma \left( \mu +1\right) }\left( \psi \left(
t_{0}+a\right) -\psi \left( t_{0}\right) \right) ^{\mu }+\dfrac{\mathbf{M}\mathbf{K}_{1}}{%
\Gamma \left( \mu +1\right) }\left( \psi \left( t_{0}+h+a\right) -\psi
\left( t_{0}\right) \right) ^{\mu }$.

By means of the Gronwall inequality, we have
\begin{equation*}
\left\Vert t^{1-\gamma }\left( u\left( t+h\right) -u\left( t\right) \right)
\right\Vert \leq \widetilde{\mathbf{P}}h\mathbb{E}_{\mu }\left[ \mathbf{ML}\Gamma \left( \mu
\right) \left( \psi \left( t\right) -\psi \left( a\right) \right) ^{\mu }%
\right] ,\text{ }t\in I,
\end{equation*}
where $\mathbb{E}_{\mu(\cdot)}$ is a Mittag-Leffler function with one parameter.

Therefore, $\ u$ is Lipschitz continuous on $I$. The Lipschitz continuity of $u$ on $I$ combined with (iii) give that $ t\rightarrow f\left( t,u\left( t\right) \right) $ is Lipschitz continuous on  $I$. Also by assumption (iv), we have $t\rightarrow \dfrac{1}{\Gamma \left( \mu \right) }\displaystyle\int_{t_{0}}^{t}\mathbb{H}^{\mu }\left( t,s\right) \mathbf{K}\left( t,s,u\left( s\right) \right) ds$ which is Lipschitz continuous on $I$.

Using Theorem \ref{theo31}, we observe that the equation
\begin{equation*}
\begin{array}{cll}
^{H}\mathbb{D}_{t_{0}+}^{\mu ,\nu }v\left( t\right) +\mathcal{A}\left( t\right)  & = &  f\left( t,u\left( t\right) \right) +\dfrac{1}{\Gamma \left( \mu \right) }\displaystyle \int_{t_{0}}^{t}\mathbb{H}^{\mu }\left( t,s\right) \mathbf{K}\left( t,s,u\left( s\right) \right) ds \\ 
I_{t_{0}}^{1-\gamma }v\left( t_{0}\right)  & = & u_{0}-g\left( t_{1},...,t_{p},u\left( \cdot \right) \right) 
\end{array}
\end{equation*}
has a unique strong solution $v$ on $I$ satisfying the equation
\begin{eqnarray*}
v\left( t\right)  &=&\mathbb{S}_{\mu ,\nu }\left( t\right) [u_{0}- g\left( t_{1},...,t_{p},u\left( \cdot \right) \right)]+\int_{t_{0}}^{t}\mathbf{K}_{\mu }\left( t-s\right) f\left( s,u\left( s\right) \right) ds  \nonumber \\
&&+\int_{t_{0}}^{t}\mathbf{K}_{\mu }\left( t-s\right) \int_{t_{0}}^{s}\mathbb{H}^{\mu }\left( s,\tau \right) \mathbf{K}\left( s,\tau ,u\left( \tau \right) \right) d\tau ds \nonumber \\
&=&u\left( t\right) ,\text{ }t\in I.
\end{eqnarray*}

Consequently, $u$ is a strong solution of problem Eq.(\ref{eq1}) on $I$.
\end{proof}

\begin{corolario} Assume that the {\rm CONDITIONS II} are true and taking the limit $\nu\rightarrow 0$ on both sides of the {\rm Eq.(\ref{eq4})} then the Riemann-Liouville fractional integro-differential equation with nonlocal conditions given by {\rm Eq.(\ref{eq1})} has a unique solution in $I$.
\end{corolario}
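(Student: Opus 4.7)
The plan is to reduce this corollary directly to Theorem \ref{theo34}, exactly in the spirit of the mild-solution corollary that accompanies Theorem \ref{theo31}. First I would observe that the limit $\nu\to 0$ in the definition of the Hilfer derivative Eq.(\ref{eq4}) collapses the outer Riemann--Liouville integral, yielding
\[
\lim_{\nu\to 0}{}^{\mathbf{H}}\mathbb{D}_{0+}^{\mu,\nu}u(t)=\Bigl(\tfrac{d}{dt}\Bigr)^{n} I^{\,n-\mu}_{t_{0+}}u(t),
\]
which is precisely the classical left-sided Riemann--Liouville derivative of order $\mu$. Simultaneously the type parameter $\gamma=\mu+\nu(1-\mu)$ tends to $\mu$, so the nonlocal initial condition $I^{1-\gamma}_{t_0+}u(t_0)+g(t_1,\dots,t_p,u(\cdot))=u_0$ passes in the limit to the standard Riemann--Liouville nonlocal initial condition $I^{1-\mu}_{t_0+}u(t_0)+g(t_1,\dots,t_p,u(\cdot))=u_0$. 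Hence Eq.(\ref{eq1}) formally becomes a Riemann--Liouville fractional integro-differential problem.

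Next I would verify that every item of \textbf{CONDITIONS II} survives this passage to the limit. The reflexivity of $\Omega$, the ball $B_r$, the Lipschitz estimates on $f$, $\mathbf{K}$ and $g$, the semigroup generation hypothesis on $-\mathcal{A}$, and the admissibility bound
\[
\mathbf{M}\left(\mathbf{Q}_{0}+\mathbf{L}a+\frac{\mathbf{K}_{0}a}{\Gamma(\mu)^{2}}\bigl(\psi(t_{0}+a)-\psi(t_{0})\bigr)^{\mu}\right)<1
\]
are all independent of $\nu$ or depend on it only through $\gamma$, for which $\gamma\to\mu$ preserves the weighted space framework $C_{1-\gamma}(J,\Omega)$ in the limit. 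Consequently the hypotheses of Theorem \ref{theo34} are fulfilled for the Riemann--Liouville problem obtained as $\nu\to 0$.

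With this in hand, the conclusion of Theorem \ref{theo34} applies verbatim: the problem admits a unique strong solution $u$ on $I$, given as the fixed point of the operator $\mathfrak{F}$ but now with the kernel $\mathcal{K}_\mu$ and the solution operator $\mathbb{S}_{\mu,\nu}$ specialized to $\nu=0$. The main obstacle I expect is the purely technical one of justifying that the operator families $\mathbb{S}_{\mu,\nu}$ and $\mathcal{K}_{\nu}$, defined through the Wright function $\mathbf{M}_\nu$, converge in a strong enough sense (e.g.\ uniformly in $t$ on compacts of $I$ in the operator norm) as $\nu\to 0$ to their Riemann--Liouville analogues, so that the integral representation of the mild solution and all the bounds used in the proof of Theorem \ref{theo34} transfer continuously. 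Once this continuity under $\nu\to 0$ is recorded, the Lipschitz argument for $t\mapsto f(t,u(t))$ and $t\mapsto \tfrac{1}{\Gamma(\mu)}\int_{t_0}^{t}\mathbb{H}^{\mu}(t,s)\mathbf{K}(t,s,u(s))\,ds$, the application of the Gronwall inequality of Theorem \ref{te1}, and the identification of the mild solution with a strong one, all carry over unchanged, completing the proof.
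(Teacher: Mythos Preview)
Your approach is essentially the same as the paper's: reduce the corollary to Theorem \ref{theo34} by observing that $\nu\to 0$ turns the Hilfer derivative into the Riemann--Liouville one and that the hypotheses of CONDITIONS II are unaffected. The paper's own proof is the single sentence ``Direct consequence of Theorem \ref{theo34}'', so your additional care about the convergence of $\mathbb{S}_{\mu,\nu}$ and $\mathcal{K}_{\nu}$ as $\nu\to 0$ already goes well beyond what the authors provide; in fact, since Theorem \ref{theo34} is stated for the full range $0\leq\nu\leq 1$, one can simply specialize to $\nu=0$ rather than pass to a limit, which sidesteps the operator-convergence issue you flag.
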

\begin{proof}
Direct consequence of Theorem \ref{theo34}.
\end{proof}

\section{Concluding remarks}

We conclude the paper with the main purpose achieved, i.e., we investigate the existence and uniqueness of a new class of mild and strong solutions for the Hilfer fractional integro-differential equations in the Banach space through the Banach fixed point and the Gronwall inequality in the context of continuously $C_{0}$-semigroup. However, some questions remain open and are motivations for a future work among them, the main one is the possibility to investigate the existence and uniqueness of mild and strong solutions of integrodifferential and differential fractional equations in the sense of the $\psi$-Hilfer fractional derivative. For this, it is necessary to obtain a Laplace transform with respect to another function, and in this sense, with the obtaining of this integral transform, it will be of paramount importance to the researchers who have been working on the subject.

\section*{Acknowledgment}

\textbf{ (JVCS) financial support of the PNPD-CAPES scholarship of the Pos-Graduate Program in Applied Mathematics IMECC-Unicamp and (DFG) on leave from Federal Institute of Maranh\~ao.}

\bibliography{ref}
\bibliographystyle{plain}

\end{document}